\theoremstyle{plain}
\newtheorem{theorem}{Theorem}[section]
\newtheorem{proposition}{Proposition}[section]
\newtheorem{lemma}{Lemma}[section]
\newtheorem{corollary}{Corollary}[section]\theoremstyle{definition}
\newtheorem{definition}{Definition}[section]
\newtheorem{example}{Example}[section]
\newtheorem{remark}{Remark}[section]
\theoremstyle{example}
\title[Entropy of shifts on topological graph $C^*$-algebras]{Entropy of shifts on topological graph $C^*$-algebras}
\author{Valentin Deaconu}
\address{Department of Mathematics\\ University of Nevada\\ Reno NV
89557-0084, USA}
\email[Valentin Deaconu]{vdeaconu@unr.edu}
\thanks
{Research supported by a UNR JFR Grant}
\subjclass{Primary 46L05; Secondary 46L55.}
\keywords{$C^*$-algebra, non commutative shift, entropy, topological graph}
\date{31 December 2008}
\begin{document}

\begin{abstract}
We give entropy estimates   for  two canonical non commutative shifts on $C^*$-algebras  associated to some topological graphs $E=(E^0,E^1,s,r)$, defined using a basis of the corresponding Hilbert bimodule $H(E)$. We compare their entropies with the growth entropies associated directly to the topological graph.  We illustrate with some examples of topological graphs considered by Katsura, where the vertex  and the edge spaces are a union of unit circles and more detailed computations can be done. \end{abstract}

\maketitle

\section{Introduction}

The topological entropy for automorphisms of nuclear $C^*$-algebras was introduced by Voiculescu in \cite{Vo} and extended by Brown to exact $C^*$-algebras in \cite{Br}. It was computed for many examples of automorphisms, endomorphisms, and completely positive (cp) maps by several authors, using often the fact that the restriction to a commutative $C^*$-algebra is a  map  for which the entropy is known. For a comprehensive treatment of various techniques of computation of this entropy, we refer to \cite{NS}.

Recall that the map \[ \Phi:{\mathcal O}_n\to {\mathcal O}_n, \;\;\Phi(c)=\sum_{i=1}^nS_icS_i^*,\] where ${\mathcal O}_n$ is the Cuntz algebra with generators $S_1,...,S_n$, is an endomorphism and has topological entropy $\log n$, see \cite{Ch}. The map $\Phi$ invariates the AF-core ${\mathcal F}_n\cong UHF(n^\infty)$ generated by monomials $S_\alpha S_\beta^*$ with $|\alpha|=|\beta|$  and the abelian algebra ${\mathcal D}_n$ generated by monomials $S_\alpha S_\alpha^*$.  Recall that for a word $\alpha=\alpha_1\cdots \alpha_k\in\{1,2,...,n\}^k$, $k\in{\mathbb N}$, we define $S_\alpha:=S_{\alpha_1}\cdots S_{\alpha_k}$ and $|\alpha|=k$. It is known that ${\mathcal D}_n$ is  isomorphic to $C(X)$, where $X=\{1,2,...,n\}^{\mathbb N}$, and $\Phi$ is called a non commutative shift, since $\Phi\mid_{{\mathcal D}_n}$ is conjugated to the map $\tilde{\sigma}:C(X)\to C(X), \;\;\tilde{\sigma}(f)=f\circ\sigma$,  where  $\sigma$ is the unilateral shift $\sigma:X\to X,\;\; \sigma(x_0x_1x_2...)=x_1x_2...$.

In the  case of the Cuntz-Krieger algebra ${\mathcal O}_\Lambda$, where $\Lambda$ is an incidence matrix, the corresponding map $\Phi$ is no longer multiplicative, but it is a unital completely positive map. Boca and Goldstein (see \cite{BG}) proved that the topological entropy of $\Phi$ is $\log \rho(\Lambda)$, where $\rho(\Lambda)$ is the spectral radius of $\Lambda$. This coincides with  the classical topological entropy of the underlying Markov shift $(X_\Lambda,\sigma)$. 
Similar results are obtained for graph $C^*$-algebras by Jeong and Park, see \cite{JP1, JP2, JP3}, and for higher-rank graph $C^*$-algebras by Skalski and Zacharias, see \cite{SZ}.
The Boca-Goldstein technique was also used by Kerr and Pinzari to analyze the noncommutative pressure and the variational principle in Cuntz-Krieger type $C^*$-algebras, see \cite{KP}.

Let $H$ be a full Hilbert bimodule over a $C^*$-algebra $A$ with a basis $\xi_1,...,\xi_n$, in the sense that for all $\xi\in H$,
\[\xi=\sum_{i=1}^n\xi_i\langle \xi_i,\xi\rangle.\]
The Cuntz-Pimsner algebra ${\mathcal O}_H$ is generated by $A$ and $S_i=S_{\xi_i}, 1\le i\le n$ with relations
\[\sum_{i=1}^nS_iS_i^*=1,\; S_i^*S_j=\langle \xi_i,\xi_j\rangle,\; a\cdot S_j=\sum_{i=1}^nS_i\langle \xi_i,a\cdot \xi_j\rangle, \;a\in A,\]
see \cite{KPW}. We consider the ucp map
\[\Phi:{\mathcal O}_H\to{\mathcal O}_H, \;\Phi(c)=\sum_{i=1}^nS_icS_i^*,\] and call it a non commutative shift. Notice that $\Phi$ leaves invariant the core ${\mathcal F}_H$, generated by monomials $S_\alpha aS_\beta^*$ with $a\in A$ and $|\alpha|=|\beta|$, and  also invariates the subalgebra ${\mathcal C}_H$ generated by monomials $S_\alpha aS_\alpha^*$. 
If $H={\mathbb C}^n$, we recover the canonical endomorphism considered by Choda, since  ${\mathcal O}_H={\mathcal O}_n$, the Cuntz algebra generated by $n$ isometries $S_1,...,S_n$. It is our goal to study the map $\Phi$ and its entropy for more general finitely generated Hilbert bimodules.

In section 2 we recall  the definition of the $C^*$-algebras $C^*(E)$ and ${\mathcal F}_E$ associated to a topological graph $E$. In section 3  we give more details about the structure of these $C^*$-algebras, in the presence of a basis $\{\xi_i\}_{1\le i\le n}$ of the Hilbert bimodule. We also define the ucp map $\Phi$ associated with such a basis. In the context of topological graph $C^*$-algebras there is another candidate for the notion of non commutative shift, denoted by $\Psi$. This is defined only on the core algebra ${\mathcal F}_E$, using the embeddings  ${\mathcal K}(H^{\otimes k})\to {\mathcal K}(H^{\otimes k+1})$ for some particular topological graphs. For these  graphs, we can associate an  \' etale groupoid as in \cite{De2}, and the map $\Psi$ restricted to the  diagonal ${\mathcal D}_E=C(E^\infty)$ coincides with the shift map on the unit space of the groupoid identified with the space of infinite paths $E^\infty$.  In section 4, we define the loop and block entropies for  topological graphs $E$ and  compute them for several examples. In section 5 we study  the entropy of the non commutative shifts $\Phi$ and $\Psi$, and the relationship with these growth entropies.  The main result computes the entropy of $\Phi$ in terms of the spectral radius of an incidence matrix. The entropy of $\Psi$ is bounded below by the loop entropy. Compared to the situation of discrete graphs or higher rank graphs, new phenomena occur, since $\Phi\mid_{{\mathcal F}_E}$ and $\Psi$ are different and they may have different entropies. We illustrate this with several examples in section 6. We also recover earlier computations from \cite{De1}.

Our results are similar with results of Pinzari, Watatani and Yonetani in \cite{PWY}. 
In the final stages of writing this paper, we also learned that  Yamashita, in his  preprint \cite {Y},  obtained
similar entropy computations for a particular class of toplogical graphs, called circle correspondences.

{\bf Acknowledgements}. This research was supported by a University of Nevada JFR Grant.

\section{Topological graphs and their $C^*$-algebras}

Topological graphs were studied  in \cite{De2} under the name continuous graphs, and generalized by several authors. We are using the terminology and many facts from  papers of Katsura, see \cite{Ka1}. 
Let $E=(E^0,E^1,s,r)$ be a  topological graph. Recall that $E^0,E^1$ are locally compact spaces, and $s,r:E^1\to E^0$ are continuous maps such that $s$ is a local homeomorphism. We think of points in $E^0$ as vertices, and of elements  $e\in E^1$ as edges from $s(e)$ to $r(e)$. Several examples of topological graphs will be considered in section 4.

\begin{definition}The  $C^*$-algebra of a graph $E$, denoted $C^*(E)$ is defined to be the Cuntz-Pimsner algebra ${\mathcal O}_H$, where the Hilbert bimodule $H=H(E)$ over the $C^*$-algebra $A=C_0(E^0)$ is obtained as the completion of $C_c(E^1)$ using the inner product 
\[\langle \xi,\eta\rangle(v)=\sum_{s(e)=v}\overline{\xi(e)}\eta(e),\; \xi,\eta\in C_c(E^1)\]
and the multiplications
\[(\xi\cdot f)(e)=\xi(e)f(s(e)),\; (f\cdot\xi)(e)=f(r(e))\xi(e).\] 
The core algebra ${\mathcal F}_E$ is the fixed point algebra under the gauge action, see below.
\end{definition}

Recall that a Hilbert bimodule over a $C^*$-algebra $A$ (sometimes called a C*-correspondence from $A$ to $A$) is a Hilbert $A$-module $H$ with a left action of $A$ given by a homomorphism $\varphi:A\rightarrow {\mathcal L}(H)$, where ${\mathcal L}(H)$ denotes the $C^*$-algebra of all adjointable operators on $H$. A Hilbert bimodule  is called  full if the inner products generate $A$.   For $n\ge 0$ we denote by $H^{\otimes n}$ the Hilbert bimodule obtained by taking the tensor product of $n$ copies of $H$, balanced over $A$ (for $n=0, H^{\otimes 0}=A$). For $n=2$, the inner product is given by \[\langle\xi\otimes\eta,\xi'\otimes\eta'\rangle=\langle\eta,\varphi(\langle\xi,\xi'\rangle)\eta'\rangle,\] and it is inductively defined for general $n$.

A {\em Toeplitz representation} of a Hilbert bimodule $H$ over $A$ in a C*-algebra $C$ is a pair $(\tau,\pi)$ with $\tau:H\rightarrow C$ a linear map and $\pi:A\rightarrow C$ a *-homomorphism, such that 
\[\tau(\xi a)=\tau(\xi)\pi(a), \;\tau(\xi)^*\tau(\eta)=\pi(\langle \xi,\eta\rangle), \;\tau(\varphi(a)\xi)=\pi(a)\tau(\xi).\]
The $C^*$-algebra generated by the images of $\pi$ and $\tau$ in $C$ is denoted by $C^*(\tau,\pi)$.
The corresponding universal C*-algebra is called the Toeplitz algebra of $H$, denoted by ${\mathcal T}_H$. If $H$ is full, then ${\mathcal T}_H$ is generated by elements $\tau^n(\xi)\tau^m(\eta)^*, m,n\ge 0$, where $\tau^0=\pi$  and for $n\geq 1, \tau^n(\xi_1\otimes ...\otimes\xi_n)=\tau(\xi_1)...\tau(\xi_n)$ is the extension of $\tau$ to $H^{\otimes n}$. Note that $A$ is isomorphic to a subalgebra of ${\mathcal T}_H$.

The rank one operators $\theta_{\xi,\eta}$  given by $\theta_{\xi,\eta}(\zeta)=\xi\langle\eta,\zeta\rangle$ generate an essential ideal of ${\mathcal L}(H)$, denoted ${\mathcal K}(H)$. It is known that ${\mathcal K}(H)\cong H\otimes H^*$, where $H^*$ is the dual of $H$, regarded as a left $A$-module. The rank one operator $\theta_{\xi,\eta}\in{\mathcal K}(H)$ is identified with $\xi\otimes\eta^*\in H\otimes H^*$. The map $\tau$ defines  a homomorphism $\psi:{\mathcal K}(H)\rightarrow C$ such that $\psi(\xi\otimes\eta^*)=\tau(\xi)\tau(\eta)^*$.  
A representation $(\tau,\pi)$ is {\em Cuntz-Pimsner covariant} if $\pi(a)=\psi(\varphi(a))$ for all $a$ in the ideal \[I_H=\varphi^{-1}({\mathcal K}(H))\cap (\ker \varphi)^\perp.\] 
The Cuntz-Pimsner algebra ${\mathcal O}_H$ is universal with respect to the covariant representations, and it is a quotient of ${\mathcal T}_H$.
The universal properties allow us to define a gauge action of ${\mathbb T}$ on ${\mathcal T}_H$ and ${\mathcal O}_H$ such that
\[z\cdot(\tau^n(\xi)\tau^m(\eta)^*)=z^{n-m}\tau^n(\xi)\tau^m(\eta)^*, z\in{\mathbb T}.\]
 The {\em core} $C^*$-algebra ${\mathcal F}_H$ is the fixed point algebra ${\mathcal O}_H^{\mathbb T}$, and it is generated by the union of the algebras ${\mathcal K}(H^{\otimes k})$, $k\ge 0$. For more details about the algebras ${\mathcal F}_H, {\mathcal T}_H$, and ${\mathcal O}_H$ we refer to the papers of Pimsner (\cite{Pi}) and Katsura (\cite{Ka1}).
 
\begin{remark} Notice that, since Hilbert bimodules over commutative $C^*$-algebras are associated to continuous fields of Hilbert spaces, it follows that for $H=H(E)$, where $E=(E^0,E^1,s,r)$ is a topological graph with $s$ surjective,  ${\mathcal K}(H)$ is a continuous trace $C^*$-algebra over $E^0$. The elements of ${\mathcal K}(H)$ can be thought as compact operator valued continuous functions on $E^0$ which vanish at infinity, and the elements of ${\mathcal L}(H)$ can be considered as bounded operator valued continuous functions on $E^0$.
 \end{remark}
 
 \section{Finitely generated bimodules and non commutative shifts}
 
\begin{definition}The full Hilbert bimodule $H$ over $A$ is  finitely generated
if it has a basis $\{\xi_i\}_{1\le i\le n}$, in the sense that for all $\xi\in H$,
\[\xi=\sum_{i=1}^n\xi_i\langle \xi_i,\xi\rangle.\]
\end{definition}

It is easy to check that, in this case,   $H^{\otimes k}$ has basis $\xi_{i_1}\otimes\cdots \otimes \xi_{i_k}$, where $i_1,...,i_k\in\{1,2,...,n\}$. 
Denote by $S_i$ the image of $\xi_i$ in the Cuntz-Pimsner algebra ${\mathcal O}_H$. Then ${\mathcal O}_H$ is generated  by $A$ and $S_i, 1\le i\le n$ with relations
\[\sum_{i=1}^nS_iS_i^*=1,\; S_i^*S_j=\langle \xi_i,\xi_j\rangle,\; a\cdot S_j=\sum_{i=1}^nS_i\langle \xi_i,a\cdot \xi_j\rangle, \;a\in A,\]
see \cite{KPW}.
For a word $\alpha=\alpha_1\cdots \alpha_k\in\{1,2,...,n\}^k$ of length $|\alpha|=k$, let $S_\alpha=S_{\alpha_1}\cdots S_{\alpha_k}$.

\begin{lemma} Assume that the full Hilbert bimodule $H$ over $A$ has a basis $\{\xi_i\}_{1\le i\le n}$ as above. Then ${\mathcal K}(H)={\mathcal L}(H)$, and the Cuntz-Pimsner algebra ${\mathcal O}_H$ is the closed linear span of monomials $S_\alpha aS_\beta^*$, where $\alpha, \beta$ are arbitrary words and $a\in A$. The core algebra ${\mathcal F}_H$ is the closed linear span of monomials $S_\alpha aS_\beta^*$ with $|\alpha|=|\beta|$.
\end{lemma}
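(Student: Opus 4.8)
The plan is to prove the three assertions in turn, extracting the second and third from a single computation on products of the monomials $S_\alpha a S_\beta^*$; throughout I write $\xi_\alpha := \xi_{\alpha_1}\otimes\cdots\otimes\xi_{\alpha_k}$ for a word $\alpha$ of length $k$, so that $\{\xi_\alpha : |\alpha|=k\}$ is the induced basis of $H^{\otimes k}$ and $S_\alpha = \tau^k(\xi_\alpha)$.

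\emph{The equality ${\mathcal K}(H)={\mathcal L}(H)$.} The basis condition $\xi = \sum_{i}\xi_i\langle\xi_i,\xi\rangle$ says precisely that $1_H = \sum_{i=1}^n\theta_{\xi_i,\xi_i}$, so $1_H\in{\mathcal K}(H)$. Since ${\mathcal K}(H)$ is an ideal of ${\mathcal L}(H)$ containing the unit, any $T\in{\mathcal L}(H)$ satisfies $T = T\cdot 1_H = \sum_i\theta_{T\xi_i,\xi_i}\in{\mathcal K}(H)$, whence the equality. Applying this to $H^{\otimes k}$ with its basis $\{\xi_\alpha\}$ gives ${\mathcal K}(H^{\otimes k})={\mathcal L}(H^{\otimes k})$ for every $k$.

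\emph{The description of ${\mathcal O}_H$.} Expanding in the basis of $H^{\otimes n}$ and using $\tau^n(\eta\cdot a)=\tau^n(\eta)\pi(a)$, I first write $\tau^n(\xi)=\sum_{|\alpha|=n}S_\alpha\langle\xi_\alpha,\xi\rangle$; since $H$ is full, ${\mathcal O}_H$ is generated by the elements $\tau^n(\xi)\tau^m(\eta)^*$, and each of these is therefore a finite linear combination of monomials $S_\alpha a S_\beta^*$. Conversely $S_\alpha a S_\beta^*=\tau^{|\alpha|}(\xi_\alpha\cdot a)\tau^{|\beta|}(\xi_\beta)^*$ has that form. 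It then suffices to check that $V:=\operatorname{span}\{S_\alpha a S_\beta^*\}$ is a $*$-subalgebra, for then $\overline{V}$ is a $C^*$-subalgebra containing all generators, hence $\overline V={\mathcal O}_H$. Closure under the adjoint is clear. For products I would reduce $(S_\alpha a S_\beta^*)(S_\gamma b S_\delta^*)$ by collapsing the inner factor $S_\beta^*S_\gamma$: when $|\beta|\le|\gamma|$, writing $\gamma=\gamma'\gamma''$ with $|\gamma'|=|\beta|$ and using $S_i^*S_j=\langle\xi_i,\xi_j\rangle$ gives $S_\beta^*S_\gamma=\langle\xi_\beta,\xi_{\gamma'}\rangle S_{\gamma''}$ (the case $|\beta|>|\gamma|$ is the mirror image), and then the resulting coefficient $c\in A$ is pushed past the remaining isometries via the left-action relation $c\cdot S_j=\sum_i S_i\langle\xi_i,c\cdot\xi_j\rangle$, which rewrites $c\,S_{\gamma''}$ as $\sum_\mu S_\mu c_\mu$ with $c_\mu\in A$. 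The outcome is again a finite sum of monomials in $V$.

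\emph{The description of ${\mathcal F}_H$.} Under the gauge action one has $z\cdot(S_\alpha a S_\beta^*)=z^{|\alpha|-|\beta|}S_\alpha a S_\beta^*$, and averaging gives the faithful, contractive conditional expectation $P(x)=\int_{\mathbb T}z\cdot x\,dz$ onto ${\mathcal F}_H={\mathcal O}_H^{\mathbb T}$. Since $P(S_\alpha a S_\beta^*)$ equals $S_\alpha a S_\beta^*$ when $|\alpha|=|\beta|$ and vanishes otherwise, applying the surjection $P$ to the dense $*$-subalgebra $V$ yields ${\mathcal F}_H=P({\mathcal O}_H)=\overline{\operatorname{span}}\{S_\alpha a S_\beta^*:|\alpha|=|\beta|\}$; the reverse inclusion is immediate since these monomials are gauge-fixed.

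\emph{Main obstacle.} The only genuinely technical step is the product reduction above: one must treat the two cases $|\beta|\le|\gamma|$ and $|\beta|>|\gamma|$ and correctly transport the $A$-coefficients through the isometries using the left-action formula. A minor additional care point is that $A=C_0(E^0)$ need not be unital, so a bare generator $S_i$ is recovered only as the norm limit $S_i = \lim_\lambda S_i\pi(e_\lambda)$ along an approximate unit $(e_\lambda)$ of $A$, which already lies in $\overline V$.
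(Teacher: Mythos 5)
Your proposal is correct, and its two main components coincide with the paper's own argument: the equality ${\mathcal K}(H)={\mathcal L}(H)$ is obtained in both cases from the observation that the basis relation makes ${\mathcal K}(H)$ unital (your explicit formula $T=\sum_i\theta_{T\xi_i,\xi_i}$ just spells out why a unital ideal is everything), and the density of $\operatorname{span}\{S_\alpha aS_\beta^*\}$ in ${\mathcal O}_H$ rests on exactly the same product-collapse computation, reducing $S_\beta^*S_\gamma$ by cases on $|\beta|$ versus $|\gamma|$ and transporting $A$-coefficients through the $S_i$ via $c\cdot S_j=\sum_i S_i\langle\xi_i,c\cdot\xi_j\rangle$. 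You add two small refinements the paper leaves implicit: the verification, via the basis expansion $\tau^n(\xi)=\sum_{|\alpha|=n}S_\alpha\pi(\langle\xi_\alpha,\xi\rangle)$, that the Toeplitz generators actually lie in the span, and the approximate-unit remark for non-unital $A$. The one genuinely different step is the treatment of ${\mathcal F}_H$: the paper simply re-runs the combinatorial argument ``keeping track of the lengths of the involved words,'' i.e.\ it shows the equal-length span is itself a $*$-algebra, whereas you average over the gauge action and apply the conditional expectation $P(x)=\int_{\mathbb T}z\cdot x\,dz$ to the already-established dense subalgebra of ${\mathcal O}_H$. Your route is cleaner and avoids duplicating the computation, at the (mild) cost of invoking the identification ${\mathcal F}_H={\mathcal O}_H^{\mathbb T}$ and the continuity and surjectivity of $P$ onto the fixed-point algebra --- facts the paper has already recorded in Section 2, so the shortcut is fully justified here; the paper's version, by contrast, is self-contained at the algebraic level and directly exhibits ${\mathcal F}_H$ as an inductive limit of the spans at each fixed length.
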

\begin{proof}
Since $\sum_{i=1}^n S_iS_i^*=1$, it is clear that ${\mathcal K}(H)$ is unital, and therefore ${\mathcal K}(H)={\mathcal L}(H)$. It suffices to show that the product of two  monomials as above is a sum of monomials of the same form.
Consider the product $S_\alpha aS_\beta^*S_\gamma bS_\delta^*$ for some words $\alpha,\beta,\gamma,\delta$ and $a,b\in A$. Using the relations $S_i^*S_j=\langle \xi_i,\xi_j\rangle\in A$ and $a\cdot S_j=\sum_{i=1}^nS_i\langle \xi_i,a\cdot \xi_j\rangle$ repeatedly, for $|\beta|=|\gamma|$ we get that $S_\beta^*S_\gamma$ belongs to $A$; for $|\beta|>|\gamma|$ we get $S_\beta^*S\gamma=S_{\beta'}^*c$ with $|\beta'|=|\beta|-|\gamma|$ and $c\in A$; and for $|\beta|<|\gamma|$ we get $S_\beta^*S_\gamma=dS_{\gamma'}$ with $|\gamma'|=|\gamma|-|\beta|$ and $d\in A$. In the first case we are done. In the second and the third case, we use again the relation $a\cdot S_j=\sum_{i=1}^nS_i\langle \xi_i,a\cdot \xi_j\rangle$ repeatedly to write
\[S_{\beta'}^*c=(c^*S_{\beta'})^*=(\sum_j S_{\mu^j}c_j)^*=\sum_jc_j^*S_{\mu^j}^*\]
and respectively
\[dS_{\gamma'}=\sum_iS_{\nu^i}d_i,\]
where $\mu^j, \nu^i$ are words, and $c_j, d_i\in A$.
We get \[S_\alpha aS_\beta^*S_\gamma bS_\delta^*=\sum_jS_\alpha ac_j^*S_{\mu^j}^*bS_\delta^*\]
and
\[S_\alpha aS_\beta^*S_\gamma bS_\delta^*=\sum_iS_\alpha aS_{\nu^i}d_ibS_\delta^*.\]
We can use again the above relations to write $S_{\mu^j}^*b$ as $\sum_kb_{jk}S_{\mu^{jk}}^*$ with $b_{jk}\in A$ and
$aS_{\nu^i}=\sum_lS_{\nu^{il}}a_{il}$ with $a_{il}\in A$ to get the desired result. For the core algebra we can use the same method, keeping track of the lengths of the involved words.
\end{proof}

\begin{corollary}
If in addition  $\langle\xi_i,\xi_j\rangle=0$ for all $1\le i\neq j\le n$,  then $S_i$ are partial isometries. Set $Q_i:=S_i^*S_i$ and  $P_i:=S_iS_i^*$ for $i=1,...,n$. Suppose that $Q_i=\sum_{j=1}^n\Lambda(i,j)P_j$ for some incidence matrix $\Lambda$. Then the non-zero elements $S_\mu P_iS_\nu^*$ for $|\mu|=|\nu|=k$ and $i=1,...,n$ form a system of matrix units generating a finite dimensional $C^*$-algebra $F_k$.
 If, moreover, $A$ is commutative and unital with $A\subset{\mathcal K}(H)$, then there is an isomorphism $F_k\otimes A\cong{\mathcal K}(H^{\otimes k})\cong {\mathcal L}(H^{\otimes k})$,
and the Cuntz-Pimsner algebra ${\mathcal O}_H$  contains a copy of the Cuntz-Krieger algebra ${\mathcal O}_\Lambda$. The core ${\mathcal F}_H$ is isomorphic to the inductive limit $\varinjlim{\mathcal K}(H^{\otimes k})\cong\varinjlim F_k\otimes A$, where the embeddings are determined by the left action of $A$, and it  contains a copy of the stationary AF-algebra determined by $\Lambda$.   There is  a commutative subalgebra ${\mathcal C}_H\subset{\mathcal F}_H$, generated by monomials $S_\alpha aS_\alpha^*$ with $a\in A$, which contains o copy of $C(X_\Lambda)$, where 
 $X_\Lambda=\{(x_k)\in\{1,2,...,n\}^{\mathbb N}:\Lambda(x_k,x_{k+1})=1\}.$
\end{corollary}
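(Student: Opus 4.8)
The plan is to first pin down the relations among the generators, then build the matrix-unit structure level by level, and finally read off the tensor decomposition, the inductive limit, and the commutative diagonal. For the opening step I would use the orthogonality hypothesis: $\langle\xi_i,\xi_j\rangle=0$ for $i\ne j$ gives $S_i^*S_j=0$, so the $P_i=S_iS_i^*$ are mutually orthogonal projections with $\sum_iP_i=1$, while applying the basis identity $\xi=\sum_i\xi_i\langle\xi_i,\xi\rangle$ to $\xi=\xi_i$ yields $\xi_i=\xi_i\langle\xi_i,\xi_i\rangle$, i.e. $S_iQ_i=S_i$, hence $S_iS_i^*S_i=S_i$ and each $S_i$ is a partial isometry with $Q_i=S_i^*S_i$ a projection. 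The hypothesis $Q_i=\sum_j\Lambda(i,j)P_j$ is exactly the Cuntz--Krieger relation, so by the universal property of ${\mathcal O}_\Lambda$ together with the gauge-invariant uniqueness theorem (the $P_i$ being nonzero) the $S_i$ generate a copy of ${\mathcal O}_\Lambda$ inside ${\mathcal O}_H$.

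Next I would establish the matrix units. The key identity is $S_\mu P_iS_\nu^*=S_{\mu i}S_{\nu i}^*$, exhibiting these as the elements $S_\alpha S_\beta^*$ with $\alpha,\beta$ admissible words of length $k+1$ sharing their last letter. To multiply $S_\mu P_iS_\nu^*\cdot S_{\mu'}P_{i'}S_{\nu'}^*$ I would first prove by induction on length, using $S_a^*S_b=\delta_{ab}Q_a$ and $Q_i=\sum_j\Lambda(i,j)P_j$, that for admissible words of equal length $S_\nu^*S_{\mu'}=\delta_{\nu\mu'}Q_{\nu_k}$, and that $P_iQ_{\nu_k}P_{i'}=\Lambda(\nu_k,i)\delta_{ii'}P_i$ since the $P_j$ are orthogonal. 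This gives $S_\mu P_iS_\nu^*\cdot S_{\mu'}P_{i'}S_{\nu'}^*=\delta_{\nu\mu'}\delta_{ii'}\Lambda(\nu_k,i)\,S_\mu P_iS_{\nu'}^*$; and because a nonzero $S_\mu P_iS_\nu^*$ forces $\Lambda(\mu_k,i)=\Lambda(\nu_k,i)=1$, the nonzero monomials obey the standard matrix-unit relations ($e_{ab}e_{cd}=\delta_{bc}e_{ad}$, $e_{ab}^*=e_{ba}$). Hence $F_k$ is the finite direct sum of the matrix algebras indexed by the terminal letter $i$, with block sizes the number of length-$k$ words that may precede $i$.

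Then I would invoke the Remark: since $A=C(E^0)$ is commutative, unital and contained in ${\mathcal K}(H)={\mathcal L}(H)$, it is the center of the continuous-trace algebra ${\mathcal K}(H)$, and likewise of ${\mathcal K}(H^{\otimes k})$. As ${\mathcal K}(H^{\otimes k})$ is the closed span of the monomials $S_\mu aS_\nu^*$ with $|\mu|=|\nu|=k$ (the Lemma restricted to this degree), expressing each such monomial through the matrix units with central coefficient $a$ yields ${\mathcal K}(H^{\otimes k})\cong F_k\otimes A$ with $A=1_{F_k}\otimes A$ central. Since ${\mathcal F}_H$ is generated by $\bigcup_k{\mathcal K}(H^{\otimes k})$ with the canonical inclusions $T\mapsto T\otimes\mathrm{id}_H$, determined on monomials by the left action $a\cdot S_j=\sum_iS_i\langle\xi_i,a\cdot\xi_j\rangle$, this gives ${\mathcal F}_H\cong\varinjlim{\mathcal K}(H^{\otimes k})\cong\varinjlim F_k\otimes A$; forgetting the central $A$, the partial multiplicities of $F_k\to F_{k+1}$ are governed by $\Lambda$, so $\varinjlim F_k\otimes 1$ is precisely the stationary AF-algebra of $\Lambda$. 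For the last assertion I would note that the generators $S_\alpha aS_\alpha^*$ commute, since the $P_\alpha=S_\alpha S_\alpha^*$ for admissible $\alpha$ are pairwise orthogonal or nested and $A$ is central, so ${\mathcal C}_H$ is commutative; the subalgebra generated by the $P_\alpha$ alone is, by Gelfand duality, $C$ of its spectrum, and the nesting/orthogonality pattern of the $P_\alpha$ identifies that spectrum with $X_\Lambda$, giving $C(X_\Lambda)\subset{\mathcal C}_H$.

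I expect the main obstacle to be the third step: showing that $A$ really sits \emph{centrally} in each ${\mathcal K}(H^{\otimes k})$ and that the decomposition is an honest tensor product $F_k\otimes A$ rather than a twisted bundle. This is exactly where the topological-graph hypotheses (continuous trace, $A$ commutative and unital inside ${\mathcal K}(H)$) are indispensable; by contrast the matrix-unit computation of the second step, though lengthier, is routine once the reduction $S_\nu^*S_\nu=Q_{\nu_k}$ for admissible $\nu$ has been carried out.
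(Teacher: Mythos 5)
Your first two steps are sound and actually supply details the paper leaves implicit: the partial-isometry argument is exactly the paper's, and your matrix-unit computation (reduction $S_\nu^*S_{\mu'}=\delta_{\nu\mu'}Q_{\nu_k}$, then $P_iQ_{\nu_k}P_{i'}=\delta_{ii'}\Lambda(\nu_k,i)P_i$, with the observation that nonzero monomials force $\Lambda(\nu_k,i)=1$), together with gauge-invariant uniqueness for the embedded copy of ${\mathcal O}_\Lambda$, is a legitimate expansion of the paper's ``it is clear.''

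The genuine gap is in your third step. The hypothesis $A\subset{\mathcal K}(H)$ means that the \emph{left} action $\varphi\colon A\to{\mathcal L}(H)$ takes values in ${\mathcal K}(H)$; it does not put $A$ in the center of ${\mathcal K}(H)$, and in this setting $\varphi(A)$ is essentially never central. The central copy of $A$ in ${\mathcal L}(H)$ is the one given by \emph{right} multiplication $\xi\mapsto\xi a$ (central because adjointable operators are right-module maps); you have conflated it with $\varphi(A)$, which is the copy the hypothesis refers to and the one that appears in the monomials $S_\mu aS_\nu^*$. The paper's own Example 6.3 refutes your claim: there ${\mathcal K}(H)\cong{\mathbb M}_3\otimes C({\mathbb T})$, whose center is $1\otimes C({\mathbb T})$, while $\varphi(u)=S_2S_1^*+S_1uS_2^*+S_3u^3S_3^*$ is visibly not central; concretely, $uP_1=uS_1S_1^*=S_2S_1^*$, so $P_2uP_1=S_2S_1^*\neq 0=P_2P_1u$, i.e.\ the commutation rule $aP_i=P_ia$ that your phrase ``expressing each such monomial through the matrix units with central coefficient $a$'' requires is false. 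This is not a removable technicality: the left action is precisely the data encoding the range map $r$, and its failure to commute with ${\mathcal K}(H)$ is what makes the connecting maps ${\mathcal K}(H^{\otimes k})\to{\mathcal K}(H^{\otimes k+1})$, and hence the shift $\Psi$, dynamically nontrivial. Consequently your derivation of $F_k\otimes A\cong{\mathcal K}(H^{\otimes k})$, and with it the identification ${\mathcal F}_H\cong\varinjlim F_k\otimes A$, does not go through as written. Note that the paper's proof never asserts centrality: it exhibits the isomorphism by the explicit asymmetric formula $S_\mu P_iS_\nu^*\otimes a\mapsto S_\mu aP_iS_\nu^*$, with $a$ inserted through $\varphi$ to the left of $P_i$, and any honest verification must work with the relation $a\cdot S_j=\sum_i S_i\langle\xi_i,a\cdot\xi_j\rangle$ rather than with commutativity --- indeed the fact that $P_iaP_j=S_i\langle\xi_i,a\xi_j\rangle S_j^*$ need not vanish for $i\neq j$ is exactly the subtlety your argument papers over. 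Your remaining assertions (the refinement $S_{\mu i}S_{\nu i}^*=\sum_j\Lambda(i,j)S_{\mu i}P_jS_{\nu i}^*$ giving embeddings $F_k\subset F_{k+1}$ with multiplicities governed by $\Lambda$, hence the stationary AF subalgebra, and the commuting projections $S_\alpha S_\alpha^*$ generating a copy of $C(X_\Lambda)$) are independent of the faulty step and are fine.
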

\begin{proof}
Since $\langle\xi_i,\xi_j\rangle=0$ for $i\neq j$, we get $\xi_i=\xi_i\langle\xi_i,\xi_i\rangle$, which implies $S_i=S_iS_i^*S_i$ for all $i$.
It follows that each $S_i$ is a partial isometry, and therefore $P_i, Q_i$ are projections. Under our assumptions, it is clear that $S_i$ generate a copy of the Cuntz-Krieger algebra ${\mathcal O}_\Lambda$. The isomorphism $F_k\otimes A\cong{\mathcal K}(H^{\otimes k})$ is given by the map
$S_\mu P_iS_\nu^*\otimes a\mapsto S_\mu aP_iS_\nu^*$. The rest follows from the previous lemma.
\end{proof}

\begin{definition} The canonical non commutative shift associated to a basis $\{\xi_i\}_{1\le i\le n}$ of the Hilbert bimodule $H$ over $A$ is the ucp map
\[\Phi:{\mathcal O}_H\to{\mathcal O}_H, \;\Phi(c)=\sum_{i=1}^nS_icS_i^*.\] 
 \end{definition}
\begin{remark}
Notice that $\Phi$ leaves invariant the core algebra ${\mathcal F}_H$, generated by monomials $S_\alpha aS_\beta^*$ with $a\in A$ and $|\alpha|=|\beta|$, and  also invariates the subalgebra ${\mathcal C}_H$ generated by monomials $S_\alpha aS_\alpha^*$. In the hypotheses of the above corollary, the map $\Phi$  also leaves invariant the subalgebra $C(X_\Lambda)$, and its restriction is conjugated to the map induced by the Markov shift $\sigma_\Lambda:X_\Lambda \to X_\Lambda, \sigma_\Lambda(x_0x_1x_2\cdots)=x_1x_2\cdots$.
\end{remark}

In our examples of topological graphs, $E^0$ and $E^1$ are compact and $s, r$ are surjective local homeomorphisms. In that case, it follows that the left action of $A=C(E^0)$ on  $H(E)$ is by compact operators. Moreover,  the Hilbert module $H=H(E)$ will have a basis $\xi_1,\xi_2,...,\xi_n$ satisfying
 \[\langle\xi_i,\xi_j\rangle=\delta_{ij}Q_i,\]
where $Q_i\in A$ are projections. It follows from the corollary that the $C^*$-algebra $C^*(E)={\mathcal O}_H$ is generated by $A=C(E^0)$ and $n$ partial isometries $S_1, S_2,...,S_n$ with orthogonal ranges satisfying some commutation relations determined by the range map $r$. 

For these topological graphs, there is also a groupoid approach for $C^*(E)$. For $k\ge 2$, let's define $E^k$ to be the space of paths of length $k$ in the topological graph and $E^\infty=\varprojlim E^k$ to be the space of infinite paths. For $k\ge 0$, we define the equivalence relation $R_k$ on $E^k$ by
\[R_k=\{(e,f)\in E^k\times E^k: s(e)=s(f)\},\]
with the induced product topology. Note that $R_0$ is just the diagonal in $E^0\times E^0$. The set
 \[\Gamma=\Gamma(E)=\{(x, p-q, y)\in E^\infty\times{\mathbb Z}\times E^\infty:\sigma^p(x)=\sigma^q(y)\},\]
with natural operations $(x, k,y)(y,l,z)=(x,k+l,z), (x,k,y)^{-1}=(y,-k,x)$ and appropriate topology, becomes an amenable \' etale groupoid,  and its $C^*$-algebra is isomorphic to $C^*(E)$, see \cite{De2}.
 
\begin{definition} The map $\varphi:C(E^0)\to C^*(R_1)$ induces  embeddings $\Psi_k:C^*(R_k)\to C^*(R_{k+1})$ for each $k\ge 0$, which define another non commutative shift $\Psi: {\mathcal F}_E\to {\mathcal F}_E$. This is a $*$-homomorphism.
\end{definition}
 \begin{remark}
 Recall that we have  isomorphisms $C^*(R_k)\cong{\mathcal K}(H^{\otimes k})$.
  There is a natural diagonal isomorphic to $C(E^\infty)$ inside $\varinjlim C^*(R_k)\cong{\mathcal F}_E$,
and the restriction $\Psi\mid_{C(E^\infty)}$  is conjugated to the map defined by the shift  $\sigma:E^\infty\to E^\infty$,  $\sigma(e_1e_2e_3\cdots)=e_2e_3\cdots$, which is a surjective local homeomorphism. 
 \end{remark}
 
 \section{Growth entropies of topological graphs}

Recall that a {\em path} of length $k$ in a topological graph $E=(E^0,E^1,s,r)$ is a concatenation $e_1e_2\cdots e_n$ of edges in $E^1$ such that $s(e_i)=r(e_{i+1})$ for all $i$.  The maps $s$ and $r$ extend naturally to $E^k$. A vertex $v\in E^0$ is viewed as a path of length $0$. 
For each vertex $v\in E^0$ and $k\ge 1$ let
\[E^k_s(v):=\{e\in E^k\mid s(e)=v\},\; E^k_r(v):=\{e\in E^k\mid r(e)=v\}\]
and 
\[E^k_{\ell}(v):=\{e\in E^k\mid s(e)=r(e)=v\},\]
the set of loops of length $k$ based at $v$. We will be mostly interested in locally finite topological graphs in the sense that each vertex emits and receives a finite number of edges. In that case, the sets $E^k_s(v), E^k_r(v)$ and $E^k_{\ell}(v)$ are finite for each $k$ and $v$. If both maps $s$ and $r$ are onto, then $E^k_s(v), E^k_r(v)$ are nonempty for all $k$ and $v$, but $E^k_{\ell}(v)$ may be empty. Define
\[E^k_\ell:=\bigcup_{v\in E^0}E^k_{\ell}(v).\]

\begin{definition} Assume $E$ is a locally finite topological graph. The loop entropy $h_{\ell}(E)$ and the block entropy $h_b(E)$ of $E$ are defined by
\[h_{\ell}(E)=\limsup_{k\to\infty}\frac{1}{k}\log|E^k_{\ell}|,\;h_b(E)=\sup_{v\in E^0}\limsup_{k\to\infty}\frac{1}{k}\log|E^k_r(v)|,\]
where $|L|$ denotes the number of elements in $L$.
\end{definition}

\begin{remark} 
For a topological graph $E$ where $r$ is also a local homeomorphism, we can interchange $s$ and $r$ to get a new topological graph, denoted $E^t$ and called the  transposed graph. We have $h_{\ell}(E)=h_{\ell}(E^t)$ but $h_b(E)\neq h_b(E^t)$ in general.  It is known that the loop entropy and the block entropy make sense for discrete graphs, and that these entropies can be computed in terms of the incidence matrix. For example, if the irreducible finite graph $E$ is given by a matrix $\Lambda_E$, then it is known that $h_{\ell}(E)=h_b(E)=\log \rho(\Lambda_E)$, where $\rho(\Lambda_E)$ denotes the spectral radius. Since $\rho(\Lambda_E)=\rho(\Lambda_E^t)$, for finite graphs we have $h_b(E)=h_b(E^t)$. This equality fails for infinite graphs, as was shown by Jeong and Park in Example 3.3 of \cite{JP2}.
 For topological graphs,  see the next examples.
\end{remark}

\begin{example} Let $X$ be a compact metric space, and $\sigma:X\to X$ a continuous surjective map. Define the topological graph $E=E(X,\sigma)=(E^0,E^1,s,r)$, where $E^0=E^1=X,\; s=id$ and $r=\sigma$. For $k\ge 2$, the space
\[E^k=\{(x_1,...,x_k)\in X\times ...\times X\mid x_j=\sigma(x_{j+1}), j=1,...,k-1\}\]
is homeomorphic to $X$. For a fixed $v\in E^0=X$ we can identify $E^k_s(v)$ with $\{v\}$,  $E^k_r(v)$ with $\sigma^{-k}(v)$ and $E^k_{\ell}$ with $\{v\in X\mid \sigma^k(v)=v\}$. Hence 
\[h_{\ell}(E)= \limsup_{k\to\infty}\frac{1}{k}\log|\{x\in X\mid\sigma^k(x)=x\}|,\; h_b(E)=\sup_{v\in X}\limsup_{k\to \infty}\frac{1}{k}\log |\sigma^{-k}(v)|.\]
\noindent Recall that for an expansive map $\sigma:X\to X$ of a metric space $X$, \[h_{top}(\sigma)\ge \limsup_{k\to\infty}\frac{1}{k}\log|\{x\in X\mid\sigma^k(x)=x\}|,\] see theorem 8.16 in \cite{Wa}, which gives the inequality
\[h_{top}(\sigma)\ge h_{\ell}(E(X,\sigma)).\] 
\end{example}
\begin{corollary} A topological graph $E$ with $E^1$ a compact metric space determines a local homeomorphism \[\sigma=\sigma_E:E^\infty\to E^\infty, \sigma(e_1e_2e_3...)=e_2e_3...,\] where \[E^\infty=\{e_1e_2...\in (E^1)^{\mathbb N} \mid s(e_j)=r(e_{j+1})\}\] is the space of infinite paths. Then the set of fixed points of $\sigma$ coincides with the union of loops $\bigcup_{k\ge 1}E^k_{\ell}$. If $\sigma$ is expansive, it follows that 
\[h_{top}(\sigma)\ge h_{\ell}(E).\]
\end{corollary}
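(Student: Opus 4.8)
The plan is to treat the three assertions of the corollary separately: that $\sigma$ is a local homeomorphism of a compact metric space, that its periodic points are precisely the infinite repetitions of loops, and that expansiveness then forces the entropy estimate via the cited theorem of Walters.

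For the topological claim, I would first note that since $E^1$ is compact metric the path space $E^\infty\subseteq (E^1)^{\mathbb N}$ is compact and metrizable in the product topology, being the closed subset cut out by the continuous relations $s(e_j)=r(e_{j+1})$; this is what makes $h_{top}(\sigma)$ and the notion of expansiveness meaningful. To see that $\sigma$ is a local homeomorphism I would use the standing hypothesis that $s$ is a local homeomorphism: given $x=e_1e_2\cdots$, choose a neighborhood $V$ of $e_1$ on which $s$ is injective and set $U=\{f_1f_2\cdots\in E^\infty: f_1\in V\}$. If $\sigma(f)=\sigma(g)$ with $f,g\in U$, then $f$ and $g$ agree from the second edge onward, so $s(f_1)=r(f_2)=r(g_2)=s(g_1)$, and injectivity of $s$ on $V$ gives $f_1=g_1$; hence $\sigma|_U$ is injective, and openness of $\sigma$ follows because local homeomorphisms are open. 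This recovers the local homeomorphism property already recorded in the remark following the definition of $\Psi$ and proved in \cite{De2}.

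For the identification of periodic points with loops, a direct computation gives $\sigma^k(e_1e_2\cdots)=e_{k+1}e_{k+2}\cdots$, so $x=e_1e_2\cdots$ satisfies $\sigma^k(x)=x$ exactly when $e_{j+k}=e_j$ for all $j$. Such an $x$ is the infinite repetition $\mu\mu\mu\cdots$ of its initial segment $\mu=e_1\cdots e_k$, and the path condition forces the wrap-around $s(e_k)=r(e_1)$, i.e. $\mu\in E^k_\ell$; conversely every length-$k$ loop repeats to a point fixed by $\sigma^k$. Thus $\mu\mapsto\mu\mu\cdots$ is a bijection from $E^k_\ell$ onto $\operatorname{Fix}(\sigma^k):=\{x:\sigma^k(x)=x\}$, and in particular $|\operatorname{Fix}(\sigma^k)|=|E^k_\ell|$. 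Taking the union over $k$ identifies the set of all periodic points of $\sigma$ with $\bigcup_{k\ge1}E^k_\ell$, which is the asserted coincidence (reading ``fixed points'' as periodic points).

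Finally, assuming $\sigma$ expansive, I would apply Theorem 8.16 of \cite{Wa} exactly as in Example 4.1 to get $h_{top}(\sigma)\ge\limsup_{k\to\infty}\frac1k\log|\operatorname{Fix}(\sigma^k)|$, and substitute $|\operatorname{Fix}(\sigma^k)|=|E^k_\ell|$ so that the right-hand side becomes $h_\ell(E)$ by definition. The genuinely substantive point is the local injectivity argument of the second paragraph, which rests entirely on $s$ being a local homeomorphism; a minor subtlety to keep in mind is that a length-$k$ loop may be a proper power of a shorter one, but this is harmless, since $E^k_\ell$ counts all loops of length $k$ while $\operatorname{Fix}(\sigma^k)$ collects all points of period dividing $k$, so the bijection is unaffected.
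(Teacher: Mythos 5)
Your proposal is correct and matches the paper's (implicit) argument: the paper offers no separate proof of this corollary, deriving it directly from Example 4.1 --- Walters' Theorem 8.16 for expansive maps --- together with the identification of the points fixed by $\sigma^k$ with the loops in $E^k_\ell$, which is exactly your bijection $\mu\mapsto\mu\mu\mu\cdots$, including your correct reading of ``fixed points'' as periodic points. The only soft spot is your justification that $\sigma$ is open (``local homeomorphisms are open'' is circular if it refers to $\sigma$ itself); it should instead invoke openness of the local homeomorphism $s$: the image under $\sigma$ of a basic cylinder set $\{f\in E^\infty : f_j\in V_j,\ 1\le j\le m\}$ is the cylinder determined by $V_2,\dots,V_m$ intersected with the open condition $r(g_1)\in s(V_1)$ on the first coordinate, which is open since $s(V_1)$ is open and $r$ is continuous.
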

\begin{example} Consider the topological graphs $E_{p,q}$ from Example A.6  of \cite{Ka3}. Recall that the vertex and edge spaces are copies of the unit circle ${\mathbb T}$,  the source and range maps are $\;s(z)=z^p$ and $r(z)=z^q$, where  $p, q$ are integers with $p\ge 1$ and $q\neq 0$. Let's assume  that $(p,q)=1$. Then it is easy to see that
\[|E^k_s(v)|=p^k,\;\;|E^k_r(v)|=|q|^k,\;\; |E^k_{\ell}|=|p^k-|q|^k|.\]
It follows that for this topological graph, $h_b(E)=\log |q|,\; h_b(E^t)=\log p$, and $h_{\ell}(E)=\log\max\{ p,|q|\}$.
The space of infinite paths $E_{p,q}^{\infty}$ is homeomorphic to 
\[\Sigma(p,q)=\{(z_0,z_1,z_2,...)\in {\mathbb T}^{\mathbb N}\mid z_k^p=z_{k+1}^q\},\] 
which is a $1$-dimensional solenoid.
 In particular, the shift $\sigma:\Sigma(p,q)\to\Sigma(p,q),\; \sigma(z_0,z_1,z_2,...)=(z_1,z_2,...)$ has entropy $\ge\log\max\{p, |q|\}$.  For a more general result about the entropy of automorphisms of solenoids, see \cite{Kw} and \cite{LW}.
\end{example}

\begin{example} Consider the topological graph $E=(E^0,E^1,s,r)$ where $E^0={\mathbb T}, E^1={\mathbb T}\times\{1,2\}, s(z,1)=z^2, r(z,1)=z, s(z,2)=z, r(z,2)=z^3$. The space of infinite paths is the generalized solenoid 
\[E^\infty=\{(z_k,a_k)_{k\ge 0}\in ({\mathbb T}\times \{1,2\})^{\mathbb N}\mid a_k=1 \Rightarrow \; z_{k+1}^2=z_k, \;a_k=2 \Rightarrow \;z_{k+1}=z_k^3\}.\]
Counting all the possible words
$(z_1,a_1)(z_2,a_2)\cdots(z_m,a_m)$ for a fixed $v=r(z_1,a_1)\in{\mathbb T}$, we get 
$|E^m_r(v)|=4^m.$ 
It follows that $h_b(E)=\log 4$.
Counting all the possible words
$(z_1,a_1)(z_2,a_2)\cdots(z_m,a_m)$ for a fixed $v=s(z_m,a_m)\in{\mathbb T}$, we get 
$|E_s^m(v)|=3^m,$ 
hence $h_b(E^t)=\log 3$. Counting all the loops $(z_1,a_1)(z_2,a_2)\cdots(z_m,a_m)$, we get \[|E^m_\ell|=\sum_{k=0}^m\left(\begin{array}{c}m\\k\end{array}\right)|2^{m-k}-3^k|.\] It follows that $h_\ell(E)=\log 4$,
since
\[4^m-3^m=|\sum_{k=0}^m\left(\begin{array}{c}m\\k\end{array}\right)(2^{m-k}-3^k)|\le\sum_{k=0}^m\left(\begin{array}{c}m\\k\end{array}\right)|2^{m-k}-3^k|\le\sum_{k=0}^m\left(\begin{array}{c}m\\k\end{array}\right)3^k=4^m.\]
\end{example}
\begin{example} More general, for a discrete graph $E=(E^0,E^1,s,r)$ with no sinks, Katsura considers in \cite{Ka3} the topological graph $E\times_{p,q}{\mathbb T}$ with vertex space $E^0\times{\mathbb T}$, edge space $E^1\times{\mathbb T}$, source map $s(e,z)=(s(e),z^{p(e)})$ and range map $r(e,z)=(r(e),z^{q(e)})$, where $p:E^1\to \{1,2,3,...\}$ and $q:E^1\to{\mathbb Z}$ are  two maps. 
The infinite path space is the generalized solenoid
\[(E\times_{p,q}{\mathbb T})^\infty=\{(e_1,e_2,...; z_1,z_2,...)\in E^\infty\times{\mathbb T}^\infty\mid z_k^{p(e_k)}=z_{k+1}^{q(e_{k+1})}\; \text{for}\;\; k=1,2,...\}.\]
Define the matrices $P,Q$, where
\[P(v,w)=\sum_{e\in s^{-1}(v)\cap r^{-1}(w)}p(e), \;\;Q(v,w)=\sum_{e\in s^{-1}(v)\cap r^{-1}(w)}q(e)\] for $v,w\in E^0$. 

We believe that, for $E$ finite, under certain conditions on the maps $p$ and $q$, we have
\[h_b(E\times_{p,q}{\mathbb T})=\log\rho(Q),\;\; h_b((E\times_{p,q}{\mathbb T})^t)=\log\rho(P),\;\;h_\ell(E\times_{p,q}{\mathbb T})=\log\max\{\rho(P),\rho(Q)\},\]
but we were unable to prove it at this time. 
\end{example}

\section {Entropy of non commutative shifts}

We recall the definition and a few useful facts concerning the topological entropy of completely positive maps. The reader may consult \cite{Br} or \cite{NS} for an extensive treatment.
Let $A$ be an exact $C^*$-algebra, $\pi:A\to {\mathcal L}({\mathcal H})$ a faithful *-representation on a Hilbert space ${\mathcal H}$, and $\omega\subset A$ a finite subset. For $\delta>0$ we put
\[CPA(\pi, A)=\{(\phi,\psi,B)\mid \phi:A\to B, \psi: B\to {\mathcal L}({\mathcal H})\;\;\text{contractive cp maps,}\; dim (B)<\infty\},\]
\[rcp(\pi,\omega,\delta)=\inf\{rank(B)\mid (\phi,\psi,B)\in CPA(\pi, A), ||\psi\circ\phi(a)-\pi(a)||<\delta\;\; \forall\; a\in\omega\},\]
where $rank(B)$ denotes the dimension of a maximal abelian subalgebra of $B$. Since the completely positive rank $rcp(\pi,\omega,\delta)$ is independent of the choice of $\pi$, we may write $rcp(\omega,\delta)$ instead of $rcp(\pi,\omega,\delta)$.

\begin{definition} Let $A\subset {\mathcal L}({\mathcal H})$ be a $C^*$-algebra, $\Phi:A\to A$ a cp map, $\omega\subset A$ finite and $\delta>0$. Put
\[ht(\Phi,\omega,\delta)=\limsup_{k\to\infty}\frac{1}{k}\log(rcp(\bigcup_{i=0}^{k-1}\Phi^i(\omega),\delta)),\;\;ht(\Phi,\omega)=\sup_\delta ht(\Phi,\omega,\delta),\;\; ht(\Phi)=\sup_\omega ht(\Phi,\omega).\]
The number $ht(\Phi)\in [0,\infty]$ is called the topological entropy of $\Phi$. 
\end{definition}
\begin{remark} We collect here some useful facts:

1) If $A_0\subset A$ is a $\Phi$-invariant $C^*$-subalgebra of $A$, then $ht(\Phi\mid_{A_0})\le ht(\Phi)$;

2) If $\{\omega_k\}$ is an increasing sequence of finite subsets of $A$ such that the linear span of $\displaystyle \bigcup _{k,m\ge 0}\Phi^m(\omega_k)$ is dense in $A$, then $\displaystyle ht(\Phi)=\sup_k ht(\Phi,\omega_k)$.

3) If $\sigma:X\to X$ is a continuous map on a compact metric space $X$, then $ht(\tilde{\sigma})=h_{top}(\sigma)$, where $\tilde{\sigma}:C(X)\to C(X),\;\; \tilde{\sigma}(f)=f\circ \sigma$, and $h_{top}$ is the classical topological entropy defined in chapter 7 of \cite{Wa}.
\end{remark}

For the remaining of this paper, we consider a topological graph $E=(E^0,E^1,s, r)$ such that $E^0$ and $E^1$ are compact metric spaces, and $s,r$ are surjective local homeomorphisms. In this case, it follows that the left action of $A=C(E^0)$ on  $H=H(E)$ is by compact operators. Moreover, we assume that the Hilbert module $H=H(E)$ has a basis $\xi_1,\xi_2,...,\xi_n$ satisfying
 \[\langle\xi_i,\xi_j\rangle=\delta_{ij}Q_i,\]
where $Q_i\in A=C(E^0)$ are projections. Suppose that $Q_i=\sum_{j=1}^n\Lambda(i,j)P_j$ for some incidence matrix $\Lambda$, where $P_i=S_iS_i^*$. Recall that the $C^*$-algebra $C^*(E)={\mathcal O}_H$ is generated by $A$ and $n$ partial isometries $S_1, S_2,...,S_n$ with orthogonal ranges satisfying some commutation relations determined by the range map $r$. It contains a copy of the Cuntz-Krieger algebra ${\mathcal O}_\Lambda$. Denote by $G=G(\Lambda)$ the finite graph defined by
the incidence matrix $\Lambda$.
 Notice that $S_\mu$ for $\mu\in \{1,2,...,n\}^k$ is nonzero precisely when $\mu$ is a path of length $k$ in this finite graph. 
We are interested to determine the entropy of the non commutative shifts $\Phi$ and $\Psi$ defined in section 3 for such a topological graph $E$.

\begin{lemma} Consider a topological graph $E$ as above. For each $m\ge 1$ there is a $*$-homomorphism $\chi_m:{\mathcal O}_H\to M_{w(m)}\otimes{\mathcal O}_H$ given by
\[\chi_m(x)=\sum_{|\mu|=|\nu|=m}S_\mu S^*_{\nu}\otimes S^*_\mu xS_\nu,\]
where $w(m)$ denotes the number of paths of length $m$ in the associated graph $G$.
\end{lemma}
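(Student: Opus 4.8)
The plan is to recognize $\chi_m$ as the $*$-homomorphism induced by conjugating with a co-isometry row over $\mathcal{O}_H$. First I would fix an enumeration $\nu^{(1)},\dots,\nu^{(w(m))}$ of the length-$m$ paths of $G$ (equivalently, the words $\nu$ with $S_\nu\neq 0$) and let $\{e_{\mu\nu}\}$ be the standard matrix units of $M_{w(m)}$ indexed by these paths, so that the first tensor leg $S_\mu S_\nu^*$ in the formula is read as $e_{\mu\nu}$; with this reading $\chi_m(x)$ is just the $w(m)\times w(m)$ matrix over $\mathcal{O}_H$ whose $(\mu,\nu)$-entry is $S_\mu^* x S_\nu$. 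Packaging the generators into the row $V=(S_{\nu^{(1)}},\dots,S_{\nu^{(w(m))}})\in M_{1,w(m)}(\mathcal{O}_H)$, this says precisely $\chi_m(x)=V^* x V$, where $x$ is viewed as a $1\times1$ matrix.

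The one substantive input is the resolution of the identity $\sum_{|\nu|=m} S_\nu S_\nu^*=1$, equivalently $VV^*=1$, i.e. $V$ is a co-isometry. I would prove this by induction on $m$ from the defining relation $\sum_{i=1}^n S_iS_i^*=1$: splitting a length-$(m+1)$ path as $\nu=i\nu'$ with $|\nu'|=m$ and using $\sum_{|\nu'|=m}S_{\nu'}S_{\nu'}^*=1$ gives $\sum_{|\nu|=m+1}S_\nu S_\nu^*=\sum_i S_i\big(\sum_{|\nu'|=m}S_{\nu'}S_{\nu'}^*\big)S_i^*=\sum_i S_iS_i^*=1$. The terms indexed by non-paths contribute nothing, since $S_\nu=0$ whenever $\nu$ is not a path, which is exactly why $V$ has length $w(m)$.

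Granting this, the homomorphism properties are formal. Linearity is clear. For multiplicativity, $\chi_m(x)\chi_m(y)=V^*xVV^*yV=V^*x(VV^*)yV=V^*xyV=\chi_m(xy)$; spelled out entrywise this is the matrix-product identity $\sum_{\nu}(S_\mu^* x S_\nu)(S_\nu^* y S_{\nu'})=S_\mu^* x\big(\sum_\nu S_\nu S_\nu^*\big)yS_{\nu'}=S_\mu^* xy S_{\nu'}$. For the $*$-property, $\chi_m(x)^*=(V^*xV)^*=V^*x^*V=\chi_m(x^*)$, which entrywise is the reindexing $(S_\mu^* x S_\nu)^*=S_\nu^* x^* S_\mu$ sending $e_{\mu\nu}$ to $e_{\nu\mu}$. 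These three properties are precisely the definition of a $*$-homomorphism, so we are done.

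I do not expect a serious obstacle: once $\chi_m$ is read as $V^*(\cdot)V$ for the co-isometry row $V$, the whole statement collapses to $VV^*=1$, and notably no orthogonality of the $S_\nu$ beyond this single relation is used. The only genuine pitfall is interpretational — one must not read the first tensor factor $S_\mu S_\nu^*$ as the element of $\mathcal{O}_H$ of the same name. Those elements need not form a full set of matrix units: already for a discrete $G$ with more than one vertex, $S_\mu S_\nu^*$ vanishes unless $s(\mu)=s(\nu)$, so the nonzero ones span only $\bigoplus_v M_{d_v}\subsetneq M_{w(m)}$, with $d_v$ the number of length-$m$ paths with source $v$; taking the formula literally would already fail multiplicativity on a single partial isometry. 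Reading them as the full system of matrix units of $M_{w(m)}$, equivalently working with $V^*(\cdot)V$, removes the difficulty entirely.
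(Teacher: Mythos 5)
Your proof is correct and is essentially the paper's own argument in compact form: the paper verifies multiplicativity entrywise via exactly your identity $\sum_{\nu}(S_\mu^* x S_\nu)(S_\nu^* y S_{\nu'})=S_\mu^* x\bigl(\sum_{\nu} S_\nu S_\nu^*\bigr) y S_{\nu'}=S_\mu^* xy S_{\nu'}$, which is your $VV^*=1$ written out, and it likewise treats the first tensor leg as matrix units (its cancellation of $S_\nu^* S_{\mu'}$ to $\delta_{\nu\mu'}$ in that leg only makes sense under that reading). Your explicit induction establishing $\sum_{|\nu|=m}S_\nu S_\nu^*=1$, your verification of the $*$-property, and your closing remark on the matrix-unit interpretation only make explicit what the paper asserts without proof or leaves implicit.
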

\begin{proof}
\[\chi_m(x)\chi_m(y)=(\sum_{\mu,\nu}S_\mu S^*_\nu\otimes S^*_\mu xS_\nu)(\sum_{\mu',\nu'}S_{\mu'}S^*_{\nu'}\otimes S^*_{\mu'}yS_{\nu'})=\]\[=\sum_{\mu,\nu,\mu',\nu'}S_\mu S^*_\nu S_{\mu'}S^*_{\nu'}\otimes S^*_\mu xS_\nu S^*_{\mu'}yS_{\nu'}=\sum_{\mu, \nu'}S_\mu S^*_{\nu'}\otimes S^*_\mu x(\sum_{\nu}S_\nu S^*_\nu)yS_{\nu'}=\chi_m(xy),\]
since $\sum_{|\nu|=m}S_\nu S_\nu^*=1$.
\end{proof}

\begin{theorem} Consider $E=(E^0,E^1, s, r)$ a topological graph with $E^0$ and $E^1$ compact metric spaces such that $s,r$ are surjective local homeomorphisms. Assume that $A=C(E^0)$ is finitely generated, in the sense that there are $a_1,a_2,...,a_q\in A$ such that polynomials in $a_j$ and $a_j^*$ are dense in $A$. Assume also that there are elements $\xi_1, \xi_2,...,\xi_n$ in $H=C(E^1)$ such that
\[\xi=\sum_{i=1}^n\xi_i\langle \xi_i,\xi\rangle \;\;\text {for all}\;\; \xi\in H \; \;\text {and}\;\; \langle \xi_i,\xi_j\rangle=\delta_{ij}Q_i \;\; \text {for some projections}\;\; Q_i\in A,\; i=1,...,n.\]  Let $S_i$ be the image of $\xi_i$ in ${\mathcal O}_H$, and let $P_i=S_iS_i^*$ for $i=1,...,n$. Assume $Q_i=\sum_{j=1}^n\Lambda(i,j)P_j$, for $\Lambda=\Lambda_E$  the incidence matrix of the associated finite graph. Define  the canonical ucp map
\[\Phi: C^*(E)\to C^*(E),\;\; \Phi(c)=\sum_{i=1}^nS_icS_i^*.\]
If $C^*(E)$ is simple, then $ht(\Phi)=\log \rho(\Lambda_E)$, where $\rho(\Lambda_E)$ denotes the spectral radius of the matrix $\Lambda_E$.
\end{theorem}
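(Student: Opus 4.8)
The plan is to prove the two inequalities $ht(\Phi)\ge\log\rho(\Lambda_E)$ and $ht(\Phi)\le\log\rho(\Lambda_E)$ separately, after recording two reductions. First, since $A$ is finitely generated by $a_1,\dots,a_q$ and ${\mathcal O}_H$ is generated by $A$ together with $S_1,\dots,S_n$, the finite sets $\omega_L$ consisting of all monomials $S_\alpha a_j S_\beta^*$ with $|\alpha|,|\beta|\le L$ form an increasing sequence whose $\Phi$-orbits span a dense subalgebra; by part (2) of the Remark it suffices to estimate $ht(\Phi,\omega_L)$ for each $L$. Second, simplicity of $C^*(E)$ makes the $*$-homomorphism $\chi_m$ of the preceding Lemma injective, hence isometric, so that $rcp$ of a finite set is unchanged when the set is pushed forward by $\chi_m$ into $M_{w(m)}\otimes{\mathcal O}_H$; I will also use simplicity to guarantee that the finite graph $G(\Lambda_E)$ is irreducible, so that Perron--Frobenius gives $\tfrac1m\log w(m)\to\log\rho(\Lambda_E)$, where $w(m)$ is the number of paths of length $m$.

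For the lower bound I would restrict $\Phi$ to the invariant commutative subalgebra ${\mathcal C}_H$, which by the Corollary contains a copy of $C(X_\Lambda)$ on which, by the Remark following the Corollary, $\Phi$ is conjugate to the map $f\mapsto f\circ\sigma_\Lambda$ induced by the Markov shift on the irreducible subshift of finite type $X_\Lambda$. Part (3) of the Remark together with monotonicity (part (1)) then give $ht(\Phi)\ge ht(\Phi|_{C(X_\Lambda)})=h_{top}(\sigma_\Lambda)=\log\rho(\Lambda_E)$, the last equality being the classical value of the entropy of an irreducible one-sided subshift of finite type.

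The upper bound is the substantive half. The key structural observation is that applying $\Phi$ raises the two word-lengths of a monomial $S_\alpha a S_\beta^*$ by equal amounts while leaving the gauge degree fixed, so every element of $\bigcup_{i=0}^{k-1}\Phi^i(\omega_L)$ lies in the finitely many gauge degrees $\{-L,\dots,L\}$ and is supported on words of length at most $L+k$. I would therefore build a contractive cp factorization of this set through an algebra of the form $M_{w(L+k)}\otimes B_0$, where the matrix factor records the finitely many word-indices (its rank is comparable to $w(L+k)$, governed by $\Lambda_E$) and $B_0$ is a fixed finite-dimensional algebra through which a cp approximation of $A=C(E^0)$ is factored; crucially $B_0$ and its rank depend only on $\delta$ and on the fixed finite family $\{a_j,Q_i,P_j,\langle\xi_i,\xi_j\rangle\}\subset A$, not on $k$, since the continuous base contributes no exponential growth. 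Organizing the word-structure through $\chi_{L+k}$ and combining with the fixed approximation of $A$, I expect a bound $rcp(\bigcup_{i=0}^{k-1}\Phi^i(\omega_L),\delta)\le c(\delta,L)\,w(L+k)$, whence $ht(\Phi,\omega_L,\delta)\le\limsup_k\tfrac1k\log w(L+k)=\log\rho(\Lambda_E)$; taking suprema over $\delta$ and $L$ finishes this direction.

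The main obstacle is the honest construction of the cp factorization in the upper bound, namely cleanly decoupling the two mechanisms: the exponentially growing combinatorics of paths in $G(\Lambda_E)$, which must be captured by the matrix factor $M_{w(L+k)}$ of rank $\rho(\Lambda_E)^{L+k+o(k)}$, and the infinite-dimensional but \emph{entropy-free} commutative base $C(E^0)$, which must be absorbed into a $k$-independent finite-dimensional $B_0$. One has to verify that the two maps of the factorization are genuinely completely positive and contractive and that their composition is within $\delta$ of the identity on all of $\bigcup_{i<k}\Phi^i(\omega_L)$, including the off-diagonal gauge-degree pieces coming from the $S_j$; keeping the rank controlled while doing so, and checking that the left action of $A$ encoded in the embeddings ${\mathcal K}(H^{\otimes k})\to{\mathcal K}(H^{\otimes k+1})$ does not contribute to the growth rate, is where the real work lies.
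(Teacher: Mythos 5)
Your lower bound is correct and complete: restricting $\Phi$ to the invariant copy of $C(X_\Lambda)$ inside ${\mathcal C}_H$ (Remark 3.2) and combining parts (1) and (3) of Remark 5.1 with the classical identity $h_{top}(\sigma_\Lambda)=\log\rho(\Lambda_E)$ does give $ht(\Phi)\ge\log\rho(\Lambda_E)$; this is a slightly more commutative route than the paper's, which restricts to the invariant copy of ${\mathcal O}_\Lambda$ and quotes the known entropy of the canonical shift there. One caveat: simplicity of $C^*(E)$ does \emph{not} imply that the auxiliary finite graph $G(\Lambda_E)$ is irreducible (subalgebras of simple algebras need not be simple, and $\Lambda_E$ is attached to the basis, not to $E$ itself); fortunately you never need irreducibility, since $h_{top}(\sigma_\Lambda)=\log\rho(\Lambda_E)$ and $\lim_m\frac1m\log w(m)=\log\rho(\Lambda_E)$ hold for any $0$--$1$ matrix with no zero rows.

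The genuine gap is the upper bound, and you flag it yourself (``I expect a bound\dots'', ``where the real work lies''): the completely positive factorization is never constructed, and that construction \emph{is} the theorem. What the paper does at exactly this point is: (i) for $c=S_\alpha aS_\beta^*\in\omega(k_0)$, $l\le k-1$, $m=k+k_0$, use multiplicativity of $\chi_m$ and the relations to write $(\chi_m\circ\Phi^l)(c)=\sum_{j=1}^{h(m,k)}y_j\otimes c_j$ with $y_j\in M_{w(m)}$ \emph{partial isometries} and $c_j\in C^*(E)$; (ii) invoke nuclearity of $C^*(E)$ to choose $(\phi_0,\psi_0,M_{m_0})$ with $\|\psi_0\phi_0(c_j)-c_j\|<\delta/h(m,k)$ for all these $c_j$; (iii) use simplicity to know $\chi_m$ is a $*$-isomorphism onto its image and extend $\chi_m^{-1}$ (Arveson) to a contractive cp map $\psi_m$ on $M_{w(m)}\otimes C^*(E)$; (iv) check $\|\psi(\phi(\Phi^l(c)))-\Phi^l(c)\|<\delta$ for $\phi=(id\otimes\phi_0)\circ\chi_m$ and $\psi=\psi_m\circ(id\otimes\psi_0)$, using that the $y_j$ have norm one, which yields $rcp\le m_0\,w(k+k_0)$ and the desired $\limsup$. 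Your proposal contains none of steps (i)--(iv); in particular the expansion (i), which is what lets the matrix leg absorb all the path combinatorics, is exactly the ``decoupling'' you defer.

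Worse, the one concrete device you propose in its place --- a finite-dimensional $B_0$ approximating the base, with rank depending only on $\delta$ and the fixed family $\{a_j,Q_i,P_j,\langle\xi_i,\xi_j\rangle\}$ and \emph{not on $k$} --- is unjustified and fails in the paper's own examples. To put an element of $\bigcup_{i<k}\Phi^i(\omega_L)$ into the form ``matrix units tensor elements of $A$'' you must commute middle coefficients across the $S_i$ via $a\cdot S_j=\sum_iS_i\langle\xi_i,a\cdot\xi_j\rangle$, and this generates coefficients far outside any fixed finite family: in Example 6.3 one has $uS_3=S_3u^3$, so pushing $u$ across a word of length $l$ produces $u^{3^l}$, and the family $\{u^{3^l}:l\ge 1\}\subset C({\mathbb T})$ admits no single finite-dimensional $\delta$-approximation for small $\delta$. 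The paper sidesteps precisely this by \emph{not} reducing to $A$: it approximates the compressions $c_j=S_\mu^*\Phi^l(c)S_\nu$ inside the whole nuclear algebra $C^*(E)$, with a tolerance $\delta/h(m,k)$ chosen anew for each $k$, so that the exponential growth rate in the rank bound $m_0w(k+k_0)$ is carried by the matrix factor $w$ alone. Without either carrying out (i)--(iv) or proving a genuine uniform-in-$k$ approximation statement (which, as the example shows, cannot be the naive one you assert), the inequality $ht(\Phi)\le\log\rho(\Lambda_E)$ is not established.
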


\begin{proof}
Since $C^*(E)$ contains a copy of ${\mathcal O}_\Lambda$ such that $\Phi({\mathcal O}_\Lambda)\subset {\mathcal O}_\Lambda$, it follows that $ht(\Phi)\ge \log \rho(\Lambda_E)$,  (see also section 6 of \cite{PWY}).
For the other inequality, we use the maps $\chi_m$ from the previous lemma. Since $C^*(E)$ is simple, it follows that $\chi_m:C^*(E)\to \chi_m(C^*(E))\subset M_{w(m)}\otimes C^*(E)$ is a $*$-isomorphism. Note that for $l\ge 1$ we have
\[\Phi^l(c)=\sum_{|\gamma|=l}S_\gamma cS_\gamma^*, \;\; c\in C^*(E).\]
For $k\ge 1$, $|\beta|\le |\alpha|\le k_0$, $m\ge k+k_0$, $l\le k-1$ and $a=a_1^{p_1}\cdots a_q^{p_q},\; 0\le p_j\le k_0, \; j=1,...,q$, one has as in \cite{BG} (see also \cite{SZ})
\[(\chi_m\circ\Phi^l)(S_\alpha aS_\beta^*)=\sum_{j=1}^{h(m,k)}y_j\otimes c_j,\]
where $y_j\in M_{w(m)}$ are partial isometries, and $c_j\in C^*(E)$.

For any $k\ge 1$ consider \[\omega(k)=\{S_\alpha aS_\beta^*:|\beta |\le|\alpha |\le k,\;\; a=a_1^{p_1}\cdots a_q^{p_q},\; 0\le p_j\le k, \; j=1,...,q\}.\] Notice that $\omega_k=\omega(k)\cup\omega(k)^*$ is an increasing sequence of finite subsets of $C^*(E)$ such that the span of their union is dense in $C^*(E)$. For a fixed $k_0\ge 1$ and $\delta>0$ we have
\[\limsup_{k\to\infty}\frac{1}{k}\log rcp(\bigcup_{i=0}^{k-1}\Phi^i(\omega(k_0)), \delta)\le\log \rho(\Lambda_E).\]
Indeed, fix $k\ge 1$, and let $m=k+k_0$. Since $C^*(E)$ is nuclear, there exists $(\phi_0, \psi_0, M_{m_0})\in CPA(id, C^*(E))$ such that 
\[\|\psi_0(\phi_0(c_j))-c_j\|<\frac{\delta}{h(m,k)},\; j=1,...,h. \] 
Consider $B=M_{w(m)}\otimes M_{m_0}$ and let ${\mathcal H}$ be a Hilbert space on which $C^*(E)$ acts faithfully. The $*$-isomorphism $\chi_m^{-1}:\chi_m(C^*(E))\to C^*(E)$ extends to a cp map $\psi_m:M_{w(m)}\otimes C^*(E)\to {\mathcal L}({\mathcal H})$ with $\|\psi_m\|=1$. 
Consider the cp maps $\phi=(id\otimes \phi_0)\circ\chi_m:C^*(E)\to B$ and $\psi=\psi_m\circ(id\otimes\psi_0):B\to{\mathcal L}({\mathcal H})$. For $c=S_\alpha aS_\beta^*\in \omega(k_0)$ we get
\[\|\psi(\phi(\Phi^l(c)))-\Phi^l(c)\|=\|\psi_m(id\otimes \psi_0\phi_0)((\chi_m\circ\Phi^l)(c))-\Phi^l(c)\|=\]\[=\|\psi_m(id\otimes \psi_0\phi_0)((\chi_m\circ\Phi^l)(c))-\psi_m((\chi_m\circ\Phi^l)(c))\|\le \|(id\otimes\psi_0\phi_0)((\chi_m\circ\Phi^l)(c))-(\chi_m\circ\Phi^l)(c)\|=\]\[
=\|\sum_{j=1}^{h(m,k)}y_j\otimes(\psi_0\phi_0(c_j)-c_j)\|<\delta.\]
Hence \[rcp(\bigcup_{i=0}^{k-1}\Phi^i(\omega(k_0)), \delta)\le m_0w(m)=m_0w(k+k_0),\]
which gives
\[\limsup_{k\to\infty}\frac{1}{k}\log(\bigcup_{i=0}^{k-1}\Phi^i(\omega(k_0)), \delta)\le \limsup_k\frac{1}{k}\log w(k)=\log\rho(\Lambda_E).\] 
We conclude that $ht(\Phi)=\log \rho(\Lambda_E)$.
\end{proof}
\begin{proposition}
Consider $E=(E^0,E^1, s, r)$ a topological graph as in the previous theorem.  Assume that the shift $\sigma_E:E^\infty\to E^\infty$ is expansive. Then the non commutative shift $\Psi :{\mathcal F}_E\to {\mathcal F}_E$ defined by the maps 
$\Psi_k: C^*(R_k)\to C^*(R_{k+1})$ has entropy $ht(\Psi)\ge h_\ell(E)$.
\end{proposition}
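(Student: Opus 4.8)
The plan is to exhibit a $\Psi$-invariant commutative subalgebra on which the entropy is already computable, and then to assemble three ingredients furnished earlier in the text: monotonicity of $ht$ under restriction to an invariant subalgebra (fact 1 of the Remark following Definition 5.1), the identity $ht(\tilde\sigma)=h_{top}(\sigma)$ for the map induced by a continuous surjection (fact 3), and Corollary 4.2, which bounds $h_{top}(\sigma_E)$ below by the loop entropy precisely under the expansiveness hypothesis.

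First I would invoke the Remark following Definition 3.7: inside $\varinjlim C^*(R_k)\cong{\mathcal F}_E$ there is a diagonal subalgebra $*$-isomorphic to $C(E^\infty)$, it is $\Psi$-invariant, and $\Psi$ restricted to it is conjugate (through that isomorphism) to $\tilde\sigma:C(E^\infty)\to C(E^\infty)$, $\tilde\sigma(f)=f\circ\sigma_E$. Since $ht$ is a conjugacy invariant, $ht(\Psi\mid_{C(E^\infty)})=ht(\tilde\sigma)$, and fact 3 identifies the latter with $h_{top}(\sigma_E)$. Applying fact 1 to the invariant subalgebra $C(E^\infty)\subset{\mathcal F}_E$ then gives $ht(\Psi)\ge ht(\Psi\mid_{C(E^\infty)})=h_{top}(\sigma_E)$.

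It remains only to feed in the expansiveness hypothesis: by Corollary 4.2, expansiveness of $\sigma_E$ yields $h_{top}(\sigma_E)\ge h_\ell(E)$, so that $ht(\Psi)\ge h_{top}(\sigma_E)\ge h_\ell(E)$, as claimed. The genuine content sits entirely in this last inequality, whose proof rests on Walters' Theorem 8.16: expansiveness is exactly what guarantees that the growth rate of the periodic points of $\sigma_E$ --- which are in bijection with the loops $\bigcup_{k\ge1}E^k_\ell$ --- is dominated by $h_{top}(\sigma_E)$. Everything else is a formal chaining of the monotonicity and conjugacy properties of $ht$, so I expect the only delicate point to be confirming that the expansiveness assumption is used in exactly this place and cannot be dispensed with.
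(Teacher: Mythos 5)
Your proposal is correct and follows exactly the paper's own argument: restrict $\Psi$ to the $\Psi$-invariant diagonal $C(E^\infty)\subset{\mathcal F}_E$, use conjugacy with the shift-induced map and monotonicity of $ht$ to get $ht(\Psi)\ge h_{top}(\sigma_E)$, then invoke the corollary of Section 4 (where expansiveness enters via Walters' Theorem 8.16 on periodic points) to conclude $h_{top}(\sigma_E)\ge h_\ell(E)$. The only discrepancies are trivial reference-numbering slips (the paper's Corollary 4.1, not 4.2), which do not affect the argument.
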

\begin{proof}
The natural diagonal $C(E^\infty)$ of the groupoid $\Gamma(E)$ defined at the end of section 3 is invariant under the map $\Psi$ and its restriction $\Psi\mid_{C(E^\infty)}$ is conjugated to the map induced by shift $\sigma_E:E^\infty\to E^\infty$, hence $ht(\Psi)\ge h_{top}(\sigma)$. Now apply Corollary 4.1.
\end{proof}
\begin{corollary} 
 The non commutative shifts $\Phi, \Psi :{\mathcal F}_E\to {\mathcal F}_E$  may have different entropies.
\end{corollary}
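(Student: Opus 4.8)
The statement is an existence claim, so the plan is to produce a single topological graph $E$ satisfying all the standing hypotheses of this section for which $ht(\Psi) > ht(\Phi\mid_{{\mathcal F}_E})$. I would obtain this by squeezing the two entropies from opposite directions with results already established. On the one hand, the theorem above together with part 1) of Remark 5.1 gives the upper bound $ht(\Phi\mid_{{\mathcal F}_E}) \le ht(\Phi) = \log\rho(\Lambda_E)$, valid because $\Phi$ leaves ${\mathcal F}_E$ invariant and $C^*(E)$ is simple. On the other hand, the preceding proposition gives the lower bound $ht(\Psi) \ge h_\ell(E)$ whenever $\sigma_E$ is expansive. Thus it suffices to exhibit an admissible graph with $h_\ell(E) > \log\rho(\Lambda_E)$.

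The natural candidates are the circle correspondences $E_{p,q}$ of Example 4.2, where $h_\ell(E) = \log\max\{p,|q|\}$. The decisive observation I would make is that the incidence matrix $\Lambda_E$ depends only on the source map. Since $s(z)=z^p$ presents $E^1={\mathbb T}$ as a rank-$p$ complex vector bundle over $E^0={\mathbb T}$, and every such bundle over the circle is trivial, the module $H(E)$ carries a global orthonormal frame $\xi_1,\dots,\xi_p$ with $\langle\xi_i,\xi_j\rangle=\delta_{ij}$, so that every $Q_i=1$. Combined with $\sum_{j=1}^p P_j=1$ and the orthogonality of the ranges, the defining relation $Q_i=\sum_j\Lambda(i,j)P_j$ forces $\Lambda_E$ to be the $p\times p$ all-ones matrix; hence $\rho(\Lambda_E)=p$ and $\log\rho(\Lambda_E)=\log p$. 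This matches the count $w(k)=|E^k_s(v)|=p^k$ of admissible words.

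With this in hand I would choose $p$ and $q$ coprime with $|q|>p$, say $p=2$, $q=3$. Then
\[
ht(\Phi\mid_{{\mathcal F}_E})\le\log\rho(\Lambda_E)=\log p<\log|q|=\log\max\{p,|q|\}=h_\ell(E)\le ht(\Psi),
\]
so the two shifts have unequal entropies on ${\mathcal F}_E$, proving the corollary. The graph of Example 4.3 gives a second witness: there the source degree is $3$, so $\log\rho(\Lambda_E)=\log 3$, while $h_\ell(E)=\log 4$.

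The hard part will not be the entropy estimate but the verification that the chosen $E_{p,q}$ actually satisfies the two running hypotheses that make the bounds available: simplicity of $C^*(E_{p,q})$, needed so that the theorem yields $ht(\Phi)=\log\rho(\Lambda_E)$ rather than merely $\ge$, and expansiveness of the solenoid shift $\sigma_E$ on $\Sigma(p,q)$, needed for the proposition. Both are properties of Katsura's graphs that I would extract from Section 6 and the references cited there, along with the triviality of the rank-$p$ bundle underlying $\rho(\Lambda_E)=p$. It is worth noting that I only use the inequality $ht(\Phi\mid_{{\mathcal F}_E})\le\log\rho(\Lambda_E)$: passing from $\Phi$ to its restriction can only lower the entropy, which is precisely the direction required, so no exact computation of $ht(\Phi\mid_{{\mathcal F}_E})$ is needed.
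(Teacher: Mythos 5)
Your proposal is correct and takes essentially the same route as the paper: the paper's own proof combines Theorem 5.1 (giving $ht(\Phi\mid_{{\mathcal F}_E})$ in terms of $\log\rho(\Lambda_E)$) with Proposition 5.1 ($ht(\Psi)\ge h_\ell(E)$), and then points to Examples 6.2 and 6.3 --- precisely your witnesses $E_{p,q}$ with $|q|>p$ (where $\rho(\Lambda_E)=p$ but $h_\ell(E)=\log\max\{p,|q|\}$) and the graph of Example 4.3 (where $\rho(\Lambda_E)=3$ but $h_\ell(E)=\log 4$). The only cosmetic difference is that you deduce $\rho(\Lambda_E)=p$ from triviality of the rank-$p$ bundle over ${\mathbb T}$, whereas the paper exhibits the explicit orthonormal basis $\xi_k(z)=\frac{1}{\sqrt{p}}z^{k-1}$; both arguments also leave the simplicity and expansiveness hypotheses to Katsura's results and the solenoid literature.
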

\begin{proof}
Since ${\mathcal F}_E$ is invariant under $\Phi$, from Theorem 5.1 we get  $ht(\Phi\mid_{{\mathcal F}_E})=\log\rho(\Lambda_E)$. From the above proposition, $ht(\Psi)\ge h_\ell(E)$.
 Examples 6.2 and 6.3 in the next section show that $ht(\Phi)$ and $ht(\Psi)$  indeed may be different.

\end{proof}

\section{Examples}

\example For $X$ be a compact space, and $\sigma:X\to X$ a continuous surjective map we defined earlier the topological graph $E=E(X,\sigma)=(E^0,E^1,s,r)$, where $E^0=E^1=X,\; s=id$ and $r=\sigma$. 
We identify the Hilbert bimodule $H=H(E)$ and its tensor powers $H^{\otimes n}$ with $C(X)$. The endomorphism $\tilde{\sigma}:C(X)\to C(X),\; \tilde{\sigma}(f)=f\circ\sigma$ coincides with the left action $\varphi_r:C(X)\to {\mathcal K}(H)$ defined by $r=\sigma$, after the identification of ${\mathcal K}(H)$ with $C(X)$. Consider $(\tau, \pi)$ a Toeplitz representation of $H$. Then the constant function $1$ in $C(X)$ determines an element $U$ in $C^*(\tau, \pi)$ satisfying $U^*U=1$,
$U\pi(f)=\tau(f)$ and $\pi(f)U=\tau(\tilde{\sigma}(f))$ for $f\in C(X)$. The map $\psi:{\mathcal K}(H)\to C^*(\tau, \pi)$ can be expressed as $\psi(f)=U\pi(f)U^*$, hence $\psi(\varphi_r(f))=U\pi(\tilde{\sigma}(f))U^*$. The $C^*$-algebra ${\mathcal O}_E$ is the universal $C^*$-algebra generated by a copy of $C(X)$ and a unitary $u$ satisfying $u^*fu=\tilde{\sigma}(f)$. In particular, for $\sigma$ a homeomorphism, ${\mathcal O}_E$ is isomorphic to the crossed product $C(X)\rtimes_{\sigma}{\mathbb Z}$. For details, see the results following Example 2 in \cite{Ka1}.

The Hilbert bimodule $H$ has basis $\{\xi_1\}$, where $\xi_1$ is determined by the constant function $1$. The canonical map $\Phi:{\mathcal O}_E\to {\mathcal O}_E$ is given by $\Phi(c)=ucu^*$. Note that $\Phi(\tilde{\sigma}(f))=f$, so $\Phi$ is a left inverse for $\tilde{\sigma}$. If $\sigma$ is a homeomorphism, then it is   known that $ht(\Phi)= h_{top}(\sigma)$.

\example Consider again the topological graphs $E_{p,q}$ of Katsura  \cite{Ka3}, where  $p, q$ are integers with $p\ge 1$ and $q\neq 0$, $E^0=E^1={\mathbb T}, \;s(z)=z^p$ and $r(z)=z^q$, as in Example 4.2. The Hilbert bimodule $H_{p,q}=H(E_{p,q})=C(E^1)$ has a basis $\{\xi_1, \xi_2,...,\xi_p\}$, where $\xi_k(z)=\frac{1}{\sqrt{p}}z^{k-1}, k=1,...,p$. We have
\[\langle \xi_j,\xi_k\rangle(z)=\frac{1}{p}\sum_{w^p=z}\overline{w^j}w^k=\delta_{jk}\cdot 1\; \;\text{and}
\;\;\sum_{j=1}^p\theta_{\xi_j,\xi_j}=1.\]Indeed, let $z=exp(it)$. Then $w^p=z$ has solutions $w_m=exp(i(t+2m\pi)/p), m=0,...,p-1$. We have \[\sum_{w^p=z}\overline{w^j}w^k=\sum_{m=0}^{p-1}exp(i(k-j)(t+2m\pi)/p)=exp(i(k-j)t/p)\sum_{m=0}^{p-1}(exp(i(k-j)2\pi/p))^m=\delta_{kj}\cdot 1.\] A similar computation gives
\[\sum_{j=1}^p\xi_j\langle \xi_j,\eta\rangle(z)=\sum_{j=1}^p\xi_j(z)\sum_{w^p=z^p}\overline{\xi_j(w)}\eta(w)=\eta(z).\]
It follows that the $C^*$-algebra ${\mathcal O}_{E_{p,q}}$ is generated by the unitary $u\in C(E^0)=C({\mathbb T})$, where $u(z)=z$  and $p$ isometries $S_1, S_2,...,S_p$ satisfying the relations
\[ \;S_j^*S_k=\delta_{jk}\cdot 1,\;\; \sum_{j=1}^pS_jS_j^*=1,\;\;
 uS_k=S_{k+q},\]
where $S_{k+q}=S_lu^m$ using the unique $l\in\{1,2,...,p\}$ with $k+q=l+pm$.
The isometries $S_1,S_2,...,S_p$ generate a copy of the Cuntz algebra ${\mathcal O}_p$, and the finite graph $\Lambda$ associated to the topological graph $E_{p,q}$ has one vertex and $p$ edges.
From Theorem 5.1, we get \[ht(\Phi)=\log p.\]
We have ${\mathcal L}(H_{p,q})={\mathcal K}(H_{p,q})\cong {\mathbb M}_p\otimes C({\mathbb T})$. Let $q=pm+r$, where $r\in\{0,1,...,p-1\}$. If $1\le r+k\le p$, then 
$u\cdot \xi_k=\xi_{r+k}\cdot u^m,$
and therefore the inclusion $\Psi_0=\varphi: C(E^0)\to {\mathcal K}(H_{p,q})$ is given by
\[u\mapsto \sum_{k=1}^pS_{r+k}u^mS_k^*;\]
 if  $ p+1\le r+k\le 2p-1$, then 
$u\cdot\xi_k=\xi_{r+k-p}\cdot u^{m+1},$
 and $\Psi_0$ it is given by
\[u\mapsto \sum_{k=1}^pS_{r+k-p}u^{m+1}S_k^*.\]
The $C^*$-algebra ${\mathcal F}_{E_{p,q}}$ is isomorphic to the inductive limit
\[C({\mathbb T})\stackrel{\Psi_0}{\to} {\mathbb M}_p\otimes C({\mathbb T})\stackrel{\Psi_1}{\to}{\mathbb M}_{p^2}\otimes C({\mathbb T})\stackrel{\Psi_2}{\to} ...,\]
where the embeddings $\Psi_k$ are obtained from $\Psi_0$ by tensoring with the identity.
Recall from Example 4.2 that the space of infinite paths $E_{p,q}^{\infty}$ is homeomorphic to 
\[\Sigma(p,q)=\{(z_0,z_1,z_2,...)\in {\mathbb T}^{\mathbb N}\mid z_k^p=z_{k+1}^q\},\] 
which is a $1$-dimensional solenoid if $(p,q)=1$.
The  map $\Psi:{\mathcal F}_{E_{p,q}}\to {\mathcal F}_{E_{p,q}}$ determined by the maps $\Psi_k$, when restricted to $C(E_{p,q}^{\infty})$,   is conjugated with
the shift $\sigma$, which has entropy $h_{top}(\sigma)\ge\max\{\log p, \log|q|\}$ for $(p,q)=1$.
It follows that $ht(\Psi)\ge\log \max\{p, |q|\}$.

\example Consider the topological graph $E=(E^0,E^1,s,r)$ from Example 4.3, where $E^0={\mathbb T}, E^1={\mathbb T}\times\{1,2\}, s(z,1)=z^2, r(z,1)=z, s(z,2)=z, r(z,2)=z^3$. 
The Hilbert bimodule $H=H(E)=C(E^1)$ has a basis $\{\xi_1, \xi_2, \xi_3\}$, where $\xi_k(z,1)=\frac{1}{\sqrt{2}}z^{k-1}, \xi_k(z,2)=0$ for $ k=1,2$ and $\xi_3(z,1)=0,\; \xi_3(z,2)=1$. We have $\langle \xi_j,\xi_k\rangle=\delta_{jk}\cdot 1$. The $C^*$-algebra ${\mathcal O}_E$ is generated by $C(E^0)=C({\mathbb T})$ and three isometries $S_1,S_2,S_3$ satisfying the relations
\[\sum_{j=1}^3S_jS_j^*=1, \;\;S_j^*S_k=\delta_{jk}\cdot 1,\;\;
f\cdot S_j=\sum_{k=1}^3S_k\langle \xi_k,f\cdot \xi_j\rangle,\]
where $(f\cdot \xi_j)(z,1)=f(z)\frac{1}{\sqrt{2}}z^{j-1}, j=1,2, (f\cdot \xi_3)(z,2)=f(z^3)$ for $f\in C({\mathbb T})$. It follows from Theorem 5.1 that the $C^*$-algebra ${\mathcal O}_E$ contains a copy of the Cuntz algebra ${\mathcal O}_3$ and $ht(\Phi)=\log 3$.
Let $u\in C({\mathbb T})$ be the generator  $u(z)=z$. We get the  relations
\[uS_1=S_2,\;\; uS_2=S_1u,\;\; uS_3=S_3u^3.\]
We have ${\mathcal L}(H)={\mathcal K}(H)\cong {\mathbb M}_3\otimes C({\mathbb T})$. The inclusion $\Psi_0: C(E^0)\to {\mathcal L}(H)$ is determined by the map \[u\mapsto \left(\begin{array}{ccc}0&u&0\\1& 0&0\\0&0&u^3\end{array}\right).\]
 Indeed, the element $u\in C(E^0)$ is sent into the compact operator $\xi_2\otimes \xi_1^*+\xi_1u\otimes \xi_2^*+\xi_3u^3\otimes \xi_3^*$, because
\[(u\cdot\xi_1)(z,1)=z\xi_1(z,1)=\frac{1}{\sqrt{2}}z=\xi_2(z,1),\]
\[(u\cdot\xi_2)(z,1)=z\xi_2(z,1)=\frac{1}{\sqrt{2}}z^2=(\xi_1\cdot u)(z,1),\]
\[(u\cdot\xi_3)(z,2)=z^3\xi_3(z,2)=z^3=(\xi_3\cdot u^3)(z,2).\]
The $C^*$-algebra ${\mathcal F}_E$ is isomorphic to the inductive limit
\[C({\mathbb T})\to {\mathbb M}_3\otimes C({\mathbb T})\to{\mathbb M}_{3^2}\otimes C({\mathbb T})\to ...\]
The first inclusion is given by $u\mapsto S_2S_1^*+S_1uS_2^*+S_3u^3S_3^*$, and the other inclusions are obtained by iterating this formula.
The restriction of  $\Psi:{\mathcal F}_E\to{\mathcal F}_E$ to the diagonal $C(E^\infty)$ is conjugated to the map induced by the shift $\sigma$ on the generalized solenoid 
\[E^\infty=\{(z_k,a_k)_{k\ge 0}\in ({\mathbb T}\times \{1,2\})^{\mathbb N}\mid a_k=1 \Rightarrow \; z_{k+1}^2=z_k, \;a_k=2 \Rightarrow \;z_{k+1}=z_k^3\},\]
and has entropy $ht(\Psi)\ge \log 4$, see Example 4.3.

\example Consider the topological graph from Example 4.4. 
In the case $E^0$ and $E^1$ are finite, the Hilbert bimodule 
$H(p,q)=H(E\times_{p,q}{\mathbb T})$ has a basis $\{\xi_{e,k}\mid e\in E^1, k=1,...,p(e)\}$, where $\xi_{e,k}(e,z)=\frac{1}{\sqrt{p(e)}}z^{k-1}$ and $\xi_{e,k}(e',z)=0$ for $e'\neq e$. For each $v\in E^0$, let $ u_v\in {\mathcal O}_{E\times_{p,q}{\mathbb T}}$ be the image of  the generating unitary of $C(\{v\}\times {\mathbb T})\subset C(E^0\times {\mathbb T})$. Let $S_{e,k}\in {\mathcal O}_{E\times_{p,q}{\mathbb T}}$  be the image of $\xi_{e,k}$. Then ${\mathcal O}_{E\times_{p,q}{\mathbb T}}$ is generated by the family $\{u_v\}_{v\in E^0}$ of partial unitaries with orthogonal ranges and the family $\{S_{e,k}\}_{e\in E^1,1\le k\le p(e)}$ of partial isometries with orthogonal ranges satisfying the relations
\[S^*_{e,k}S_{e,k}=u^*_{s(e)}u_{s(e)}\; \text{for}\;e\in E^1\;\text{and}\;1\le k\le p(e),\]
\[u_{r(e)}S_{e,k}=S_{e,k+q(e)}\; \text{for}\;e\in E^1\;\text{and}\;1\le k\le p(e),\]
\[u^*_vu_v=\sum_{r(e)=v}S_{e,k}S^*_{e,k}\;\text{for}\; v\in E^0,\]
where $S_{e,k+q(e)}=S_{e,k'}u^l_{s(e)}$ for the unique $k'\in\{1,2,...,p(e)\}$ and $l\in{\mathbb Z}$ with $k+q(e)=k'+p(e)l$, see Appendix A in \cite{Ka3}. 
Consider the matrices $P,Q$, where
\[P(v,w)=\sum_{e\in s^{-1}(v)\cap r^{-1}(w)}p(e), \;\;Q(v,w)=\sum_{e\in s^{-1}(v)\cap r^{-1}(w)}q(e)\] for $v,w\in E^0$. The partial isometries $S_{e,k}$ generate a copy of the Cuntz-Krieger algebra ${\mathcal O}_\Lambda$ inside ${\mathcal O}_{E\times_{p,q}{\mathbb T}}$, and by Theorem 5.1 we get
\[ht(\Phi)=\log\rho(\Lambda).\]
If the formulas conjectured in Example 4.4 were true, this would imply that, under certain conditions on the maps $p$ and $q$, 
\[ht(\Psi)\ge\log\max\{\rho(P),\rho(Q)\}.\]

\end{document}